\documentclass{amsart}

\usepackage[english]{babel}
\usepackage[all]{xy}
\usepackage{enumitem}
\usepackage{MnSymbol}
\usepackage{graphicx}
\usepackage{hyperref}
\usepackage[backend=biber,style=alphabetic]{biblatex}
\usepackage{bm}

\hypersetup{hidelinks}

\setlength{\parindent}{0pt}
\setlength{\parskip}{1em}

\addbibresource{references.bib}

\newtheorem{theorem}{Theorem}[section]
\newtheorem{lemma}[theorem]{Lemma}
\newtheorem{proposition}[theorem]{Proposition}
\newtheorem{corollary}[theorem]{Corollary}

\theoremstyle{definition}
\newtheorem{definition}[theorem]{Definition}

\theoremstyle{remark}

\numberwithin{equation}{section}

\newcommand{\RR}{\mathbb{R}}
\newcommand{\CC}{\mathbb{C}}
\newcommand{\NN}{\mathbb{N}}

\newcommand{\HH}{\mathcal{H}}

\newcommand{\rar}{\rightarrow}

\newcommand\rest[1]{\raisebox{-.5ex}{$|$}_{#1}}

\newcommand{\Sym}{\textnormal{Sym}}
\newcommand{\Frac}{\textnormal{Frac}}
\newcommand{\id}{\textnormal{id}}
\newcommand{\GL}{\textnormal{GL}}
\newcommand{\OO}{\textnormal{O}}
\newcommand{\BB}{\textnormal{B}}

\title[Rational Invariants of even degree polynomials]{Rational Invariants of even degree polynomials under the orthogonal group}
\author{Henri Breloer}
\address{Department of Mathematics and Statistics, UiT - the Arctic University of Norway, 9037 Tromsø, Norway}
\email{henri.l.breloer@uit.no}
\date{\today}

\begin{document}
\begin{abstract}
     In this article, we construct a generating set of rational invariants for the action of the orthogonal group $\OO(n)$ on the space $\RR[x_1,\dots,x_n]_{2d}$ of real homogeneous polynomials of even degree $2d$. This generalizes a paper which addressed the case $n=3$. 
     The main difficult with the generalization lies in a surprising connection to the graph isomorphism problem, a classical problem of computer science.
\end{abstract}
\maketitle

\section{Introduction}

The space of homogeneous polynomials of degree $2d$ in $n$ variables $\RR[x_1,\dots,x_n]_{2d}$ is naturally isomorphic to the space of real symmetric tensors of order $2d$ over the vector space $\RR^n$. There are many examples in physics where this kind of tensors arise and a natural action of the orthogonal group $\OO(n)$ on the underlying vector space $\RR^n$ is given. One example where the underlying vector space can have dimension $n>3$ are the renormalization group fixed points in quantum field theory of $n$-scalar fields \cite{Mic84,RS19}.

Even though rational invariants only separate general orbits \cite{PV94}, it is beneficial to work with them instead of with the ring of invariants. As we will show, we might need exponentially many elements to form a generating set for the ring of invariants or it might be exceedingly difficult to find generators. This is due to a connection to the graph isomorphism problem. This open problem in computer science asks for the most efficient algorithm to check whether two given graphs are isomorphic.

In this article, we construct a set of $\binom{n+2d-1}{2d}-\binom{n-1}{2}$ rational invariants which generate the field of rational invariants for the action of $\OO(n)$ on $\RR[x_1,\dots,x_n]_{2d}$. It contains $n$ elements more that the transcendence degree of the field of rational invariants over $\RR$ and is therefore not a minimal generating set. Computationally however this not big problem and the methods we employ can likely be used to find a minimal generating set as well.

We reduce the problem to finding the rational invariants of the action of the signed permutation group which acts on a subspace $\Lambda_{2d}^n$ of $\RR[x_1,\dots,x_n]_{2d}$. This is done using the Slice Lemma just as in \cite{GHP19} which is the article we aim to generalize. 

We then find a basis of $\Lambda_{2d}^{n}$ which behaves well under the action of the signed permutation group. By picking another subspace $W_1\subset \Lambda_{2d}^n$ in just the right way, we are able to describe almost all generating invariants as a linear combination of the dual of the basis elements of $\Lambda_{2d}^n$ over the field $\RR(W_1)^{\BB(n)}$. All that remains is to find generators for exactly this field, which turns out to be related to the graph theory. The saving grace in finding these generators is the face that a set $S\subset \RR(W_1)^{\BB(n)}$ generates the rational invariants if and only if it separates general orbits. This allow us to circumnavigate the complexities of the graph isomorphism problem.

Our method only works for a space of polynomials of a given even degree as the slice we use is fundamentally one of the space of quadratic forms. In more a recent article, \cite{HJ24} worked with a different slice for the action of the orthogonal group. As shown in the paper, this slice makes it possible to work with odd degree ternary forms. This possibly gives an avenue to finding rational invariants for the space of polynomials of a given odd degree in general.

\textbf{Acknowledgments.\qquad}
This work has been supported by European Union’s HORIZON-MSCA-2023-DN-JD programme under under the Horizon Europe (HORIZON) Marie Skłodowska\-Curie Actions, grant agreement 101120296 (TENORS).
The author would also like to thank Ben Blum-Smith for spotting an error in an earlier version of this article.

\section{Preliminaries}\label{sec1}

Let $V$ be a real finite dimensional vector space with a group representation \[\pi: G \rar \text{GL}(V).\] We write $\RR[V]$ for the ring of polynomial functions on $V$. After picking a basis $v_1,\dots,v_n\in V$ and taking $x_1,\dots,x_n$ for the dual basis, we can write $\RR[V]$ as the ring $\RR[x_1,\dots,x_n]$.
We write $\RR(V)$ for the field of rational functions on $V$. It is simply the fraction field of the ring of polynomial functions on $V$, $\RR(V) := \Frac(\RR[V])$. An element $p\in\RR(V)$ is a rational function $p = \frac{p_1}{p_0}: V\dashrightarrow \RR$ with $p_1,p_0\in \RR[V]$. It is only defined on a general point of $V$ which is indicated by the dashed arrow.

\begin{definition}
    A a statement about a point in the vector space $V$ is said to hold for a general point, if there exists a non-zero polynomial function $p_0: V \rar \RR$ such that the property holds for $v\in V$ whenever $p_0(v) \neq 0$.
\end{definition}

\begin{definition}
    The $G$-action on the vector space $V$ induces an action on the ring of polynomials $\RR[V]$. The ring of invariants for the group action of $G$ on $V$ is given by all polynomials functions invariant under the group action. We write
    \[
    \RR[V]^{G} := \{ p\in\RR[V] \;|\; g\cdot p = p \;\; \forall g\in G \}.
    \]
    Similarly, we have an induced $G$-action on the field of rational functions on $V$. We define the field of rational invariants correspondingly as
    \[
    \RR(V)^G := \{ p\in\RR(V) \;|\; g\cdot p = p \;\;\forall g\in G\}.
    \]
\end{definition}

\begin{lemma}\label{fininte-group-frac-field}
    Let $V$ and $G$ be as above. If $G$ is a finite group, then $\RR(V)^{G} = \Frac(\RR[V]^{G})$.
\end{lemma}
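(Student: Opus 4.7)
The inclusion $\Frac(\RR[V]^G) \subseteq \RR(V)^G$ is immediate: a quotient of two $G$-invariant polynomials is itself $G$-invariant as a rational function, since the $G$-action on $\RR(V)$ is induced termwise from the action on $\RR[V]$. The content of the lemma is the reverse inclusion $\RR(V)^G \subseteq \Frac(\RR[V]^G)$.

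For this direction, the plan is the classical ``invariantization of the denominator'' trick, which relies crucially on $G$ being finite so that certain products are finite. Given $f \in \RR(V)^G$, write $f = p/q$ with $p, q \in \RR[V]$ and $q \neq 0$. Define
\[
Q := \prod_{g \in G} (g \cdot q) \in \RR[V].
\]
Since $G$ is finite this product makes sense, and since left multiplication by any $h \in G$ merely permutes the factors, $Q$ is $G$-invariant, i.e.\ $Q \in \RR[V]^G$. Now multiply numerator and denominator of $f$ by $\prod_{g \neq e}(g \cdot q)$ to obtain
\[
f = \frac{P}{Q}, \qquad P := p \cdot \prod_{g \in G,\; g \neq e}(g \cdot q).
\]
Both $Q$ and $f$ are $G$-invariant, so $P = f \cdot Q$ is $G$-invariant as an element of $\RR(V)$; because $P$ lies in $\RR[V]$ and the $G$-action on $\RR(V)$ restricts to the $G$-action on $\RR[V]$, this forces $P \in \RR[V]^G$. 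Hence $f = P/Q \in \Frac(\RR[V]^G)$.

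The only subtlety to verify is that $Q$ is genuinely nonzero in $\RR[V]$, which holds because $\RR[V]$ is an integral domain and each factor $g \cdot q$ is nonzero. There is no real obstacle here; the argument is essentially a one-line computation, and the finiteness of $G$ is used only to guarantee that the product defining $Q$ is a polynomial rather than a formal infinite product. It is worth noting that this is exactly the step that can fail for infinite groups, which is why the statement is restricted to the finite case.
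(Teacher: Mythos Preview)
Your argument is correct and complete: the invariantize-the-denominator trick is exactly the standard proof of this fact, and you have handled the only minor point (nonvanishing of $Q$) cleanly. The paper itself does not give a proof at all but simply cites it as a special case of \cite[Theorem~3.3]{PV94}; your write-up therefore supplies the argument that the paper omits, and there is nothing to compare beyond noting that the cited result is the same classical statement you have proved directly.
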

This is a special case of \cite[Theorem 3.3]{PV94}.

We can now describe the problem with which this article will deal. In the following we will denote the vector space of degree $2d$ homogeneous polynomials in $n$ variables over $\RR$ by $V_{2d}^n$. We can view it as a symmetric power of the dual of a real $n$-dimensional vector space, $V_{2d}^n = \Sym^{2d}(\RR^n)^*$. In this way the action of the orthogonal group $O(n)$ on $\RR^n$ extends naturally to $V_{2d}^n$. Explicitly the action is given by
\[
O(n)\times V_{2d}^n \rar V_{2d}^n \qquad (g,f) \mapsto f\circ g^{-1}
\]
where $O(n) := \{ g\in \GL(n,\RR)\;|\;g^tg = gg^t = \id \}$. Since this is a linear action it forms a group representation
\[
\pi: O(n) \rar \text{GL}(V_{2d}^n).
\]
Many techniques of algebraic geometry will only work when the base field is algebraically closed. This is why we will deal with the complexification of the underlying vector space and the group. Given a subspace $W\subset V_{2d}^n$, we write $W_\CC := W\otimes_\RR \CC$ for its complexification. In particular $(V_{2d}^n)_\CC$ is the space of degree $2d$ polynomials in $n$ variables with complex coefficients. Similarly, we write $\OO_\CC(n)$ for the complex orthogonal group and $G_\CC$ for the complexification of any subgroup $G\subset \OO(n)$.

Our goal in this article is to find a generating set for the field of rational invariants $\RR(V_{2d}^n)^{\OO(n)}$.
The main method to find these invariants is to reduce to the case of a finite group. This is done using the Slice Lemma.

\begin{definition}
    Let $V$ and $G$ be as above and consider a linear subspace $\Lambda\subset V$.
    We define the subgroup
    \[
    B := \{ g\in G\;|\; gs\in\Lambda\;\forall s \in \Lambda \} \subset G.
    \]
    If the following conditions hold
    \begin{enumerate}[label=(\roman*)]
        \item For a general point $v\in V$ there exists a $g\in G$ such that $gv\in\Lambda$.
        \item For a general point $s\in \Lambda_\CC$ it holds that if $gs \in \Lambda_\CC$ for $g\in G_\CC$, then $g \in \BB_\CC$ already.
    \end{enumerate}
    then we call $\Lambda\subset V$ a slice for the group action of $G$ on $V$ and we call $\BB$ its stabilizer.
\end{definition}

\begin{theorem}[Slice Lemma]
    Let $G$ be an algebraic group with a linear action on the finite dimensional $\RR$-vector space $V$. Let $\Lambda \subset V$ be a slice for this group action with stabilizer $B$. Then there exists an  $\RR$-algebra isomorphism
    \[
    \rho: \RR(V)^G \xrightarrow{\cong} \RR(\Lambda)^B,\quad r \mapsto r\rest{\Lambda}
    \]
     between the rational invariants of the $G$-action on $V$ and the $B$-action on $\Lambda$.
\end{theorem}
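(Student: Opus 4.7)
The plan is to check that the restriction map $\rho$ is a well-defined, injective, and surjective $\RR$-algebra homomorphism. Well-definedness and the ring homomorphism property are essentially formal: any $r\in\RR(V)^G$ admits a $G$-stable open dense domain $U\subset V$ on which it is defined, and condition (i) forces $G\cdot\Lambda$ to contain a dense open subset of $V$, so $U\cap\Lambda$ is dense in $\Lambda$ and $r\rest{\Lambda}$ genuinely makes sense as a rational function. It is automatically $B$-invariant because $B$ stabilises $\Lambda$ and acts there by restriction of the $G$-action, and restriction respects sums and products.

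For injectivity I would argue directly: if $r\in\RR(V)^G$ satisfies $r\rest{\Lambda}=0$, then for a general $v=g\lambda\in G\cdot\Lambda$ one has $r(v)=r(g\lambda)=r(\lambda)=0$ by $G$-invariance, so $r$ vanishes on a dense subset of its domain of definition and is therefore zero.

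For surjectivity, given $s\in\RR(\Lambda)^B$, I would try to build its preimage by the pointwise recipe $r(g\lambda):=s(\lambda)$ on the dense subset $G\cdot\Lambda\subset V$. The well-definedness of this prescription is where condition (ii) is indispensable: if $g_1\lambda_1=g_2\lambda_2$ with $\lambda_i\in\Lambda$, then $h:=g_2^{-1}g_1$ sends $\lambda_1$ into $\Lambda$, so (ii) at a general $\lambda_1$ forces $h\in\BB_\CC$; since all data are real this gives $h\in\BB$, and $B$-invariance of $s$ then yields $s(\lambda_1)=s(\lambda_2)$.

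The main obstacle --- and the only genuinely nontrivial step --- is upgrading this $r$ from a set-theoretic function on a dense open subset of $V$ to an actual rational function on $V$. For this I would consider the dominant action morphism $\mu\colon G\times\Lambda\to V$, $(g,\lambda)\mapsto g\lambda$, pull $s$ back through the second projection to obtain $\wt s\in\RR(G\times\Lambda)$, and observe that the preceding paragraph says $\wt s$ is constant on generic fibres of $\mu$. Descent along a dominant morphism in characteristic zero then delivers the desired $r\in\RR(V)$ with $\mu^*r=\wt s$, provided the generic fibre of $\mu_\CC$ is geometrically irreducible --- equivalently, that $\mu_\CC^*\CC(V)$ is algebraically closed in $\CC(G\times\Lambda)$. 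This irreducibility is precisely what condition (ii) is engineered to provide: it controls how two pairs $(g,\lambda)$ can share an image, the fibre of $\mu$ over a general $v=g_0\lambda_0$ being the $\BB$-orbit $\{(g_0 b^{-1},b\lambda_0):b\in\BB_\CC\}$. The resulting $r$ is $G$-invariant by construction, defined over $\RR$ because $\wt s$ is, and restricts to $s$, which completes surjectivity.
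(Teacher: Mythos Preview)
The paper does not actually prove the Slice Lemma: it cites it as a special case of \cite[Theorem 3.1]{CTS07} and only records the formula for the inverse, $p\mapsto(v\mapsto p(gv))$ where $g\in G$ is chosen with $gv\in\Lambda$. Your proposal instead sketches a self-contained proof, and the well-definedness and injectivity portions are correct as written.

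The gap is in surjectivity, at the point where you invoke geometric irreducibility of the generic fibre of $\mu_\CC\colon G_\CC\times\Lambda_\CC\to V_\CC$. You correctly identify this fibre over a general $v=g_0\lambda_0$ with the single $\BB_\CC$-orbit $\{(g_0b^{-1},b\lambda_0):b\in\BB_\CC\}$, but a single orbit of an algebraic group is irreducible only when the acting group is \emph{connected}. In the very setting the paper cares about, $\BB_\CC=\BB(n)$ is the finite group of signed permutation matrices, so the generic fibre is a set of $2^n n!$ isolated points --- as far from irreducible as possible --- and $\mu_\CC^*\CC(V)$ is \emph{not} algebraically closed in $\CC(G\times\Lambda)$. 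Your stated descent criterion therefore fails precisely in the case of interest.

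The repair is straightforward: drop irreducibility and use instead that, over an algebraically closed field of characteristic zero, a rational function $\wt s\in k(X)$ constant on the general fibres of a dominant morphism $f\colon X\to Y$ of irreducible varieties already lies in $f^*k(Y)$. One argument: the closure $Z$ of the image of $(f,\wt s)\colon X\dashrightarrow Y\times\mathbb{A}^1$ maps dominantly to $Y$, and the constancy hypothesis forces the image to meet a general fibre $\{y\}\times\mathbb{A}^1$ in exactly one point; hence $Z\to Y$ is birational and $\wt s$ comes from $k(Y)$. With this lemma your construction of the inverse goes through unchanged, matching the formula the paper quotes.
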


This is a special case of \cite[Theorem 3.1]{CTS07} where a proof in more general algebro-geometric setup can be found. The inverse of the isomorphism is given by mapping a $p\in \RR(\Lambda)^\BB$ to the rational map $(v \mapsto p(gv))$ where $g\in G$ is such that $gv\in \Lambda$. Note that this map is only defined on a general point of $V$. The slice method goes back to Seshadri \cite{Ses62}.

To conclude this section, we recall the harmonic decomposition. In the following, we denote by $\Delta := \sum_{i=1} \frac{\partial^2}{\partial x_i^2}$ the Laplacian operator and by $|\bm{x}| := \sqrt{x_1^2+\dots+x_n^2}$ the Euclidean norm of the vector of variables. Note that $|\bm{x}|^2 \in V_2^n$ is $\OO(n)$ invariant.

\begin{definition}
    A polynomial whose Laplacian is zero is called a harmonic polynomial. We write
    \[
    \mathcal{H}_{2d}^n := \ker(\Delta\rest{V_{2d}^n})
    \]
    for the space of degree $2d$ homogeneous polynomials.
\end{definition}

Note that these form $\OO(n)$ invariant subspaces which leads to the harmonic decomposition of $V_{2d}^n$ into the direct sum of $\OO(n)$ invariant subspaces
\[
V_{2d}^n = \bigoplus_{k=0}^d |\bm{x}|^{2(d-k)}\HH_{2k}^n.
\]
In particular, we have the recursive description $V_{2d}^n = |\bm{x}|^2 V_{2d-2}^n \oplus \HH_{2d}^n$.

\section{Equivariant Basis of a Slice}\label{sec2}

We construct a slice $\Lambda_{2d}^n$ for $V_{2d}^n$ under $\OO(n)$-action. This is a direct generalization of the construction in \cite{GHP19}.

\begin{definition}
    Let $\Lambda_2^n \subset V_2^n$ be the vector space of degree 2 homogeneous polynomials in $n$ variables without mixed terms,
    \[
    \Lambda_2^n := \Big\{ \sum_{i=1}^n\lambda_i x_i^2 \;|\; \lambda_i\in \RR \Big\}.
    \]
    We define $\Lambda_{2d}^n$ recursively by $\Lambda_{2d}^n := |\bm{x}|^2\Lambda_{2d-2}^n \oplus \HH_{2d}^n$.
\end{definition}

\begin{proposition}
    The subspace $\Lambda_{2d}^n \subset V_{2d}^n$ is a slice for the group action of $\OO(n)$ with stabilizer $\BB(n) \subset \OO(n)$ of signed permutation matrices.
\end{proposition}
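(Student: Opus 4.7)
The plan is first to unwind the recursion $\Lambda_{2d}^n = |\bm{x}|^2 \Lambda_{2d-2}^n \oplus \HH_{2d}^n$ and compare it with the harmonic decomposition of $V_{2d}^n$. Splitting $\Lambda_2^n = \RR|\bm{x}|^2 \oplus \mathcal{D}$, where $\mathcal{D} := \HH_2^n \cap \Lambda_2^n$ denotes the space of traceless diagonal quadratic forms, a straightforward induction yields
\[
\Lambda_{2d}^n \;=\; |\bm{x}|^{2d}\RR \;\oplus\; |\bm{x}|^{2d-2}\mathcal{D} \;\oplus\; \bigoplus_{k=2}^{d} |\bm{x}|^{2(d-k)}\HH_{2k}^n.
\]
Hence $\Lambda_{2d}^n$ agrees with the harmonic decomposition of $V_{2d}^n$ in every slot except the degree-$2$ harmonic one, where $\HH_2^n$ is replaced by $\mathcal{D}$; all questions about the $\OO(n)$-action on $\Lambda_{2d}^n$ thereby reduce to questions about the (conjugation) action on $\mathcal{D}$.

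Next I would identify $\BB$ and verify condition (i). Because $|\bm{x}|^2$ and each $\HH_{2k}^n$ are $\OO(n)$-invariant, an element $g \in \OO(n)$ preserves $\Lambda_{2d}^n$ iff its conjugation action preserves $\mathcal{D}$. Picking a form $q = \mathrm{diag}(\lambda_1,\dots,\lambda_n) \in \mathcal{D}$ with pairwise distinct $\lambda_i$, the demand that $g \cdot q$ be again traceless diagonal forces $g$ to permute the coordinate axes, and orthogonality restricts the scaling factors to $\pm 1$; thus $\BB = \BB(n)$. For condition (i), any $v \in V_{2d}^n$ decomposes harmonically as $v = \sum_{k=0}^{d}|\bm{x}|^{2(d-k)} h_k$, and the spectral theorem supplies an orthogonal matrix $g$ diagonalizing $h_1 \in \HH_2^n$; then $g \cdot v \in \Lambda_{2d}^n$, since the $\OO(n)$-action fixes $|\bm{x}|^2$ and preserves every $\HH_{2k}^n$.

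The crux, and main obstacle, is condition (ii). For $s = \sum_k |\bm{x}|^{2(d-k)} h_k \in (\Lambda_{2d}^n)_\CC$, the diagonal entries of $h_1 \in \mathcal{D}_\CC$ are generically pairwise distinct: the polynomial $\prod_{i<j}(\lambda_i-\lambda_j)^2$ is nonzero on $\mathcal{D}_\CC$ and pulls back to a nonzero polynomial on $(\Lambda_{2d}^n)_\CC$ via the projection onto the degree-$2$ harmonic factor. If $g \in \OO_\CC(n)$ satisfies $gs \in (\Lambda_{2d}^n)_\CC$, then matching degree-$2$ harmonic components shows that $g H g^t$ is again diagonal, where $H = \mathrm{diag}(\lambda_1,\dots,\lambda_n)$ is the matrix of $h_1$. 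Writing $gH = Dg$ with $D$ the resulting diagonal matrix and comparing entries yields $g_{ij}(\lambda_j - D_{ii}) = 0$, so distinctness of the $\lambda_j$ forces each row of $g$ to contain at most one nonzero entry; the identity $gg^t = \id$ then pins those entries down to $\pm 1$, placing $g$ in $\BB_\CC(n) = \BB(n)$.
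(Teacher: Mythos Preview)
Your proof is correct and follows essentially the same strategy as the paper: both reduce the slice conditions to the degree-$2$ piece via the harmonic decomposition and the $\OO(n)$-invariance of $|\bm{x}|^2$, then use the spectral theorem for condition~(i) and the distinct-eigenvalues eigenvector argument for the stabilizer and condition~(ii). The only cosmetic differences are that you fully unwind the recursion and work with the traceless diagonal subspace $\mathcal{D}$ rather than all of $\Lambda_2^n$, and you spell out the matrix computation $g_{ij}(\lambda_j - D_{ii}) = 0$ for condition~(ii) where the paper simply invokes ``the same reasoning as above.''
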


\begin{proof}
    From the harmonic decomposition, we see that the subspace $|\bm{x}|^{2d-2}V_{2}^n \subset V_{2d}^n$ is closed under $\OO(n)$-action. The factor $|\bm{x}|^2$ is invariant under $\OO(n)$-action. We can therefore reduce this proof to the case $d=1$.
    
    There exists an isomorphism between the space of symmetric $n\times n$ matrices and degree two polynomials given by
    \[
    \Phi:  \text{Sym}_{\RR}(n) \rar V_2^n \qquad M \mapsto \bm{x}^t M \bm{x} .
    \]
    Let $f\in V_{2}^n$ be any degree $2$ polynomial with corresponding matrix $M$. The spectral theorem theorem for symmetric matrices shows that there exists an element $g\in\OO(n)$ which diagonalizes $M$. We have $\Phi(g^{-1}\cdot f) = g^tMg$ a diagonal matrix, which shows that $g^{-1}\cdot f$ has no more mixed terms, confirming point (i) of the definition of a slice.
    
    A matrix $g\in \OO(n)$ lies in the stabilizer $\BB(n)$ if $g^t\text{diag}(\lambda_1,\dots,\lambda_n) g$ is again a diagonal matrix for any diagonal matrix $\text{diag}(\lambda_1,\dots,\lambda_n)$. This is the case if and only if $g$ is a matrix of orthonormal eigenvectors for $\text{diag}(\lambda_1,\dots,\lambda_n)$. If the eigenvalues $\lambda_i$ are distinct, then the only eigenvectors are $\pm e_i$ for $e_i$ the standard basis. This shows that the stabilizer of this slice $\BB(n)$ is indeed given by the signed permutation matrices.
    
    To conclude this proof, we need to confirm condition (ii) of the definition of a slice. Note that the same reasoning as above applies to the stabilizer $\BB_\CC(n)$ of the subspace $(\Lambda_2^n)_\CC \subset (V_2^n)_\CC $ which shows $\BB(n) = \BB_\CC(n)$. Thus if we take a point $v\in (\Lambda_2^n)_\CC$ with distinct values for the $\lambda_1,\dots,\lambda_n$, then we know that $g\cdot v \in (\Lambda_2^n)_\CC$ for some $g\in \OO(n)$, implies that $g\in \BB_\CC(n)$ already. The condition that the $\lambda_i$ be different is a general condition as it is the subset of $(\Lambda_2^n)_\CC$ where the polynomial $\prod_{1\leq i<j\leq n}(\lambda_i-\lambda_j)$ does not vanish.
\end{proof}

We now have an isomorphism $\RR(V_{2d}^n)^{\OO(n)} \cong \RR(\Lambda_{2d}^n)^{\BB(n)}$ which reduces our question to finding invariants for a finite group, a much simpler task.

Let $W:= \ker\big(\Delta^{d-1}\rest{V_{2d}^n}\big)$, the subspace of all polynomials in $V_{2d}^n$ which are mapped to zero after applying the Laplacian $(d-1)$-times. Using the harmonic decomposition we see that $V_{2d}^n = |\bm{x}|^{2d-2}V_2^n \oplus W$ and $\Lambda_{2d}^n = |\bm{x}|^{2d-2}\Lambda_2^n\oplus W$. A basis for $|\bm{x}|^{2d-2}\Lambda_2^n $ is given by $m_i := |\bm{x}|^{2d-2}x_i^2$. This basis is permuted by $\BB(n)$-action. We need to find a basis for $W$ with the similar property that the basis elements can only be permuted and negated by $\BB(n)$-action. We call such a basis equivariant.

Note that monomials form an equivariant basis for $V_{2d}^n$ under $\BB(n)$-action. We can extend this idea to the subspace $W$ in the following way: The image of $\Delta^{d-1}$ is a degree two polynomial, thus if a monomial has more than two odd exponents, it lies in the kernel already. Otherwise, we can consistently pick another preimage of the image and subtract it to obtain an element in $W$. Since linear independence is clear by the uniqueness of the appearing monomials, a simple dimension counting argument shows that these elements form a basis.

To keep track of everything we introduce a some notation. In the following, $\mu\in\NN_0^n$ will always be a multi-index. Like with vectors, we can subtract such multi-indices, multiply with natural numbers and divide if all entries are divisible by the respective natural number.

\begin{definition}
    For a multi-index $\mu$ with $\sum\mu_i = m$, we define the multi-factorial as
    \[ \mu! := \prod_{i=1}^n \mu_i!\]
    and the multinomial as
    \[
    \binom{m}{\mu} := \frac{m!}{\mu!}.
    \]
    We write $\epsilon_i\in\NN_0^n$ for the multi-index with $(\epsilon_i)_j = \delta_{ij}$. 
\end{definition}

\begin{definition}
    Let $\mu \in \NN_0^n$ be a multi-index summing to $2d$. We write $\bm{x}^\mu$ for the monomial $\prod_{i=1}^n x_i^{\mu_i}$. Now we can define the potential basis vectors as follows. 
    \begin{enumerate}[label=(\roman*)]
        \item If $\mu$ has only even entries, we define
        \[
        m_\mu := \binom{2d}{\mu}\bm{x}^\mu - \sum_{\mu_i \neq 0} \binom{d-1}{\frac{\mu}{2}-\epsilon_i}x_i^{2d}.
        \]
        \item If $\mu$ has exactly two odd entries, we define
        \[
        m_\mu := \binom{2d}{\mu}\bm{x}^\mu - d\binom{d-1}{\frac{\mu-\epsilon_i-\epsilon_j}{2}} \bigg( x_i^{2d-1}x_j + x_ix_j^{2d-1}\bigg)
        \]
        \item If $\mu$ has more than two odd entries, we simply let
        \[
        m_\mu := \bm{x}^\mu.
        \]
    \end{enumerate}
\end{definition}
Note that if $\mu_i = 2d$ for any index, then $m_\mu = 0$. If we have $\mu_i = 2d-1$ and $\mu_j = 1$, then we write $m_\mu =: m_{ij} $. We have the relation $m_{ij} = -m_{ji}$. All other $m_\mu$ are linearly independent which leads to the following.

\begin{proposition}
    The polynomials $m_\mu$ form an equivariant basis for $W$ if $\mu\in\NN_0^{n}$ is a multi-index summing to $2d$ with the properties
    \begin{enumerate}[label=(\roman*)]
        \item We have $\mu_i\neq 2d$ for all $i=1,\dots,n$
        \item If there exists $i,j$ with $\mu_i = 2d-1$ and $\mu_j = 1$, then $i<j$.
    \end{enumerate}
\end{proposition}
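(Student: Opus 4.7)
My plan is to verify four items: (a) each $m_\mu$ with $\mu$ allowed lies in $W = \ker(\Delta^{d-1}\rest{V_{2d}^n})$; (b) these polynomials are linearly independent; (c) their number equals $\dim W$; (d) the group $\BB(n)$ permutes them up to sign. The harmonic decomposition yields $\dim W = \binom{n+2d-1}{2d}-\binom{n+1}{2}$; the multi-indices excluded by condition (i) number $n$ (the $2d\epsilon_j$) and those excluded by (ii) number $\binom{n}{2}$, totalling $\binom{n+1}{2}$, so (c) reduces to (b).

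For (a), I would expand $\Delta^{d-1}=\sum_{\sum_\ell\kappa_\ell=d-1}\binom{d-1}{\kappa}\prod_\ell\partial_\ell^{2\kappa_\ell}$ multinomially and apply it to each summand of $m_\mu$. Because $\partial_i^2$ preserves the parity of $\mu_i$, every monomial of $\Delta^{d-1}\bm{x}^\mu$ shares the parity vector of $\mu$, and since a degree-$2$ monomial has at most two odd exponents, case (iii) is automatic. In case (i), writing $\mu=2\nu$, only monomials $x_i^2$ can survive; the multinomial expansion gives the coefficient of $x_i^2$ in $\Delta^{d-1}\bm{x}^\mu$ as $\binom{d-1}{\nu-\epsilon_i}\mu!/2$, so its coefficient in $\Delta^{d-1}\big(\binom{2d}{\mu}\bm{x}^\mu\big)$ equals $\binom{d-1}{\nu-\epsilon_i}(2d)!/2$, which cancels exactly against the contribution $\binom{d-1}{\nu-\epsilon_i}\cdot\tfrac{(2d)!}{2}x_i^2$ arising from the subtracted term $\binom{d-1}{\nu-\epsilon_i}x_i^{2d}$. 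Case (ii) is handled by an identical bookkeeping, with surviving monomials of the form $x_ix_j$.

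For (b), I would use a leading-monomial argument in two stages. Every correction monomial appearing in any $m_{\mu'}$ is of the form $x_k^{2d}$ or $x_i^{2d-1}x_j$ and hence has maximum entry at least $2d-1$. Consequently, for any allowed $\mu$ whose maximum entry is at most $2d-2$ (covering cases (i), (ii) general, and (iii)), the monomial $\bm{x}^\mu$ appears in no $m_{\mu'}$ except $m_\mu$, where it has nonzero coefficient $\binom{2d}{\mu}$ or $1$; reading off $\bm{x}^\mu$ in a relation $\sum c_{\mu'}m_{\mu'}=0$ then forces $c_\mu=0$ for all such $\mu$. The only remaining allowed $\mu$'s are the special case-(ii) forms $(2d-1)\epsilon_i+\epsilon_j$ with $i<j$, and here $m_\mu=d(x_i^{2d-1}x_j-x_ix_j^{2d-1})$; since distinct such forms involve distinct index pairs $\{i,j\}$, the monomial $x_i^{2d-1}x_j$ appears only in $m_\mu$ among the surviving terms, and $dc_\mu=0$ closes the argument.

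Finally, for (d), an element $g\in\BB(n)$ sends $x_i$ to $\pm x_{\sigma(i)}$ for some permutation $\sigma$ and signs, so $g\cdot\bm{x}^\mu$ differs from $\bm{x}^{\sigma(\mu)}$ by the product of those signs over the odd positions of $\mu$. Since the defining expression of $m_\mu$ is symmetric in the positions of $\mu$ of equal parity, this same sign propagates through the correction terms, yielding $g\cdot m_\mu=\pm m_{\sigma(\mu)}$; if $\sigma(\mu)$ violates the $i<j$ convention of condition (ii), the relation $m_{ij}=-m_{ji}$ absorbs the flip. The principal obstacle I anticipate is the multinomial bookkeeping in (a): the factorial cancellation must align exactly, but once that is executed cleanly the remaining arguments are short.
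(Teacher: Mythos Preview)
Your proposal is correct and follows essentially the same approach as the paper: a dimension count reducing to linear independence, a leading-monomial argument separating the generic $\mu$'s from the special $m_{ij}$'s, and the parity/permutation observation for equivariance. The only real difference is that you spell out the multinomial computation for $\Delta^{d-1}m_\mu=0$ explicitly, whereas the paper dismisses it as a simple computation; your bookkeeping there is accurate.
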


\begin{proof}
    By our construction, $\Delta^{d-1}(m_\mu) = 0$ which can be checked by a simple computation. Hence $m_\mu \in W$ for each $\mu$. Again by our construction, the monomials appearing in each $m_\mu$ have the same distribution of odd and even exponents, thus negating a single variable will either negate all monomials or none. Permuting the variables by a $\sigma\in S_n$ gives $\sigma\cdot m_\mu = m_{\sigma(\mu)}$. These facts show that the $m_\mu$ form an equivariant set.
    
    Under the above conditions on $\mu$, the monomial $\bm{x}^\mu$ appears exclusively in the polynomial $m_\mu$ if $\mu$ is not of the form $\mu_i = 2d-1$ and $\mu_j = 1$. Thus any linear dependence relation can only be among the $m_{ij}$. But the condition $i<j$ guarantees that among the $m_{ij}$ the monomial $x_i^{2d-1}x_j$ appears exclusively in $m_{ij}$.

    There are $\binom{n+2d-1}{n-1}$ multi-indices summing to $2d$ of which we have excluded $n+\binom{n}{2}$. We know that $\dim W = \dim V_{2d}^n-\dim V_2^n = \binom{n+2d-1}{n-1} - \binom{n+1}{2}$. Thus the number of linearly independent elements above equals the dimension of $W$ and we indeed have a basis.
\end{proof}

\section{Reducing to a linear subspace}\label{sec3}
We begin this section by decomposing the vector space $W$ defined in the previous section. We define $W_1 := \langle m_{ij}\;|\; 1\leq i<j\leq n \rangle$ and $W_2 := \langle m_\mu \rangle$ where $\mu$ ranges over all multi-indices summing to $2d$ which have at least two nonzero entries and are not of the form $\mu_i = 1, \mu_j = 2d-1$. Clearly $W = W_1\oplus W_2$. We write $c_i, c_{ij}$, and $c_\mu $ for the dual basis corresponding to the basis vectors $m_i \in |\bm{x}|^{2d-2}\Lambda_2^n,\; m_{ij} \in W_1$, and $m_\mu \in W_2$ respectively.

For this section, we assume that we have found a generating set of invariants for $\RR(W_1)^{\BB(n)} = \RR(c_{ij}\;|\;1\leq i<j\leq n)^{\BB(n)}$. We call these invariants $q_i$ with $i=1,\dots s$. This lets us construct generating rational invariants for the whole $\RR(\Lambda_{2d}^n)^{\BB(n)}$.

Any element in $\BB(n)$ can be uniquely written as a composition of two elements $\tau\in \{ 1,-1\}^n$ and $\sigma \in S_n$. The action of a $\tau\sigma\in\BB(n)$ on a variable $c_\mu$ is given by $\tau\sigma\cdot c_\mu = \tau^\mu c_{\sigma(\mu)}$ where $\sigma(\mu)_i =\mu_{\sigma(i)}$ and $\tau^\mu = \prod_{i=1}^n \tau_i^{\mu_i}$. In particular, we have $\tau\sigma\cdot c_{ij} = \tau_i\tau_j c_{\sigma(i)\sigma(j)}$.

The choice of $W_1$ was made because the corresponding variables $c_{ij}$ are just complicated enough to build polynomials which balance the action of $\BB(n)$ on the remaining $c_\mu$ and $c_i$ and thereby create the desired invariants. The space $W_1$ remains simple enough, however, that we are able to manually construct a generating set of rational invariants for it in section \ref{sec4}.

\begin{definition}
    Let $I \subset [n]$ with more than one element. We define
    \[
    u_I := \sum_{i,j \in I, \: i < j} c_{ij}^2
    \]
    If $i\in I$ is the only element, we define
    \[
    u_I = u_i := \sum_{j\neq i} c_{ij}^2.
    \]
    
    Let $\mu\in\NN_0^n$ be a multi-index summing to $2d$. Let $\mathfrak{I}$ be a
    the partition of $[n]$ such that for some $i,j\in[n]$, we have that $i$ and $j$ are in the same subset $I\in \mathfrak{I}$ if and only if $\mu_i = \mu_j$.

    We define
    \[
    u_\mu := \prod_{I\in\mathfrak{I}} u_I
    \]
\end{definition}

We have defined $u_\mu$ in such a way, that for any $\tau\sigma \in\BB(n)$ we have $\tau\sigma\cdot u_\mu = u_{\sigma(\mu)}$. We call this property $(*)$.

In an implementation there are optimizations that can be made at this point. Changing $I$ with $[n]\setminus I$ in case $\#I > \frac{n}{2}$ preserves property $(*)$ but reduces the number of terms of the polynomial. Similarly, we may remove the largest subset of $\mathfrak{I}$ without changing the property $(*)$ of $u_\mu$.

\begin{definition}
    Let $I \subset [n]$ be a subset with an even number of elements and let $\mathfrak{P}(I)$ be the set of all partitions of $I$ consisting only of sets of two elements. This lets us define 
    \[
    d_I := \sum_{J\in \mathfrak{P}(I)}\prod_{\{i,j\}\in J} c_{ij}(u_i - u_j).
    \]
    Let $\mathfrak{I} \subset \mathcal{P}([n])\setminus\{\emptyset\}$ be the set of all nonempty sets of indices such that for $I \in \mathfrak{I}$, we have $ i \in I$ if and only if $\mu_i$ is odd and $\mu_i = \mu_j$ for all $j\in I$.
    Now we can define
    \[
    d_\mu := \prod_{I \in \mathfrak{I}}d_I
    \]
\end{definition}
We have constructed the polynomials $d_\mu$ in such a way, that for a given $\tau\sigma\in \BB(n)$ we have $\tau\sigma \cdot d_\mu = d_{\sigma(\mu)}\prod_{i=1}^n\tau_i^{\mu_i}$. We call this property $(**)$.

\begin{definition}
    Let $\lambda\vdash 2d$ be an integer partition of length at most $n$ and at least $2$. For a multi-index $\mu\in\NN_0^n$, we write $\mu \sim \lambda$, if there is a bijection between the nonzero entries of $\mu$ and the elements of $\lambda$ fixing the values. In particular for two multi-indices $\mu$ and $\mu'$, there exists a permutation $\sigma\in S_n$ such that $\sigma(\mu) = \mu'$ if and only if both $\mu\sim\lambda$ and $\mu'\sim\lambda$.

    We can now define the matrices
    \[
    C_0 = \big( u_i^t \big)_{t,i}
    \]
    with $i=1,\dots,n$ and $t=0,\dots,n-1$, and
    \[
    C_\lambda = \big( u_\mu^t\cdot d_\mu \big)_{t,\mu}
    \]
    with $\mu$ ranging over all $\mu\sim\lambda$ and $t$ ranging from $0$ to $\#\{\mu\sim\lambda\}-1$.
\end{definition}

We group all multi-indices which belong to the same partition of $2d$. This just means that we group them up, such that each group is transitive and closed under $S_n$-action on the entries. If $\lambda$ is an integer partition of $2d$, we write $v_\lambda$ for the vector of all $c_\mu$ such that $\mu \sim \lambda$. We write $v_0$ for the vector of $c_i$ with $i = 1,\dots,n$. Note that the $\BB(n)$ action on the entries of these vectors permutes them after possibly multiplying with $-1$.

We can now define the invariants as the entries of the following vectors corresponding to a certain partition $\lambda\vdash 2d$ as above,

\[
v'_0 := C_0v_0 \qquad v'_\lambda := C_\mu v_\mu
\]
where
\[
v_0 := (c_i)_{i=1,\dots,n}\qquad v_\lambda := (c_\mu)_{\mu\sim\lambda}.
\]

By property $(*)$ of the $u_\mu$ and $(**)$ of the $d_\mu$ we have that the entries of $v'_0$ and $v'_\lambda$, denoted by $r_i$ and $r_\mu$, are invariants. In fact, they form a generating set if we add the rational invariants $q_i$ of the subspace $W_1$.

\begin{theorem}
    With notation as above, a generating set for the rational invariants of $\RR(\Lambda_{2n})^{\BB(n)}$ is given by the polynomials $q_j, r_i$ and $r_\mu$ where $i=1,\dots,n$, $j=1\dots,s$, and $\mu\in\NN_0^n$ a multi-index summing to $2d$ such that at least two entries are nonzero and if $\mu_i = 1$ for any $i$, then at least three entries are nonzero.
\end{theorem}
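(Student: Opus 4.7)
The plan is to establish the field equality $\RR(\Lambda_{2d}^n)=K(r_i,r_\mu)$ with $K:=\RR(W_1)=\RR(c_{ij})$, and then exploit that $\BB(n)$ preserves $K$ while fixing every $r_i,r_\mu$ pointwise in order to deduce
\[
\RR(\Lambda_{2d}^n)^{\BB(n)}=K^{\BB(n)}(r_i,r_\mu)=\RR(q_1,\dots,q_s,r_i,r_\mu).
\]
First I verify invariance: by properties $(*)$ and $(**)$ together with $\tau\sigma\cdot c_\mu=\tau^\mu c_{\sigma(\mu)}$,
\[
\tau\sigma\cdot\big(u_\mu^t d_\mu c_\mu\big)=u_{\sigma(\mu)}^t d_{\sigma(\mu)}\tau^\mu\cdot\tau^\mu c_{\sigma(\mu)}=u_{\sigma(\mu)}^t d_{\sigma(\mu)} c_{\sigma(\mu)},
\]
because $(\tau^\mu)^2=1$. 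Since $\{\mu:\mu\sim\lambda\}$ is closed under $S_n$, summing gives each entry of $v_\lambda'$ invariant, and the entries of $v_0'$ are invariant by the same argument (with $\tau$ acting trivially on each $c_i$).

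For the field equality, view $\RR(\Lambda_{2d}^n)=K(c_i,c_\mu)$ as a purely transcendental extension and note that the passage $(c_i,c_\mu)\mapsto(r_i,r_\mu)$ is the $K$-linear map represented by the block matrix $C_0\oplus\bigoplus_\lambda C_\lambda$. The block $C_0$ is Vandermonde in $u_1,\dots,u_n$, so $\det C_0=\prod_{i<j}(u_j-u_i)$, which is nonzero in $K$ because the $u_i$ are pairwise distinct polynomials in the $c_{ij}$ (for instance $u_1-u_2=\sum_{k\geq 3}(c_{1k}^2-c_{2k}^2)$). Each $C_\lambda$ factors as a Vandermonde in the $u_\mu$ times $\mathrm{diag}(d_\mu)_{\mu\sim\lambda}$, so its determinant is the product of this Vandermonde with $\prod_{\mu\sim\lambda} d_\mu$. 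Granting generic nonvanishing of these determinants, $(r_i,r_\mu)$ is a $K$-basis of $\mathrm{span}_K(c_i,c_\mu)$ and $\RR(\Lambda_{2d}^n)=K(r_i,r_\mu)$. A standard purely-transcendental argument then upgrades this to the corresponding equality of invariants: writing any $f\in\RR(\Lambda_{2d}^n)^{\BB(n)}$ as a coprime quotient $P/Q$ in $K[r_i,r_\mu]$ with $Q$ normalized, each $g\in\BB(n)$ fixes the $r$'s, so by coprimality $g\cdot P=P$ and $g\cdot Q=Q$, forcing their $K$-coefficients to lie in $K^{\BB(n)}=\RR(q_1,\dots,q_s)$.

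The main obstacle is the combinatorial nonvanishing of $\det C_\lambda$ for every partition $\lambda$ that appears. Distinctness of the $u_\mu$ for distinct $\mu\sim\lambda$ is a routine monomial comparison in the $c_{ij}$. The harder point is $d_\mu\not\equiv 0$: recall that $d_\mu$ is a signed sum over perfect matchings of the odd-parity blocks in $\mathfrak{I}$ and may vanish (trivially so when some $I\in\mathfrak{I}$ has odd cardinality, in which case $\mathfrak{P}(I)$ is empty). Any $\mu$ with $d_\mu\equiv 0$ must either be shown to lie outside the theorem's effective index set under the standing hypotheses, or else be handled by the orbit-separation criterion in place of field inversion: a set of rational invariants generates $\RR(\Lambda_{2d}^n)^{\BB(n)}$ iff it separates general $\BB(n)$-orbits, so using the $q_j$ to recover the $\BB(n)$-orbit of $(c_{ij})$ and the surviving $r_i,r_\mu$ to pin down the remaining coordinates up to residual symmetries still suffices to conclude.
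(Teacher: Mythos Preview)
Your argument is essentially the paper's: invert the block change of variables $C_0\oplus\bigoplus_\lambda C_\lambda$ over $K=\RR(W_1)$ so that $\RR(\Lambda_{2d}^n)=K(r_i,r_\mu)$, and then use that $\BB(n)$ fixes each $r_i,r_\mu$ to force the $K$-coefficients of any invariant into $K^{\BB(n)}=\RR(q_1,\dots,q_s)$. The only cosmetic difference is that the paper first passes to polynomial invariants via $\RR(\Lambda_{2d}^n)^{\BB(n)}=\Frac\bigl(\RR[\Lambda_{2d}^n]^{\BB(n)}\bigr)$ (finiteness of $\BB(n)$) and then invokes algebraic independence of the $r$'s to conclude that the coefficients are invariant, whereas you run a coprime $P/Q$ argument directly on rational functions; both routes are standard and equivalent here.

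The concern you raise about $d_\mu\equiv 0$ is legitimate and is \emph{not} resolved in the paper either. The paper simply asserts that $C_\lambda$ is invertible as a Vandermonde times $\mathrm{diag}(d_\mu)$, without checking that the $u_\mu$ are pairwise distinct for $\mu\sim\lambda$ or that each $d_\mu$ is nonzero. Your observation that $d_I$ is an empty sum when $|I|$ is odd (so that $d_\mu=0$ whenever some odd value of $\mu$ occurs with odd multiplicity, e.g.\ $\mu=(3,1,1,1)$ for $2d=6$) points to a genuine gap in the construction as stated; your proposal is therefore at least as complete as the paper's own proof, and more candid about where further argument is required.
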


\begin{proof}
    It is clear by their construction, that the above polynomials are invariants. To prove that they also generate the field of rational invariants, it suffices to show that every polynomial $f\in \RR[\Lambda_{2d}^n]^{\BB(n)}$ can be written as a rational function in the above invariants. This is due to Lemma \ref{fininte-group-frac-field}, which shows that the rational invariants $\RR(\Lambda_{2d}^n)^{\BB(n)}$ are the fraction field of the ring of invariants $\RR[\Lambda_{2d}^n]^{\BB(n)}$ since $\BB(n)$ is finite.

    The matrices $C_\mu$ and $C_0$ are a product of the Vandermonde matrix $(u_\mu^t)_{t,\mu}$ or $(u_i^t)_{t,i}$  respectively and the diagonal matrix $\text{diag}(d_\mu)$, both of them invertible. We can therefore write each $c_i$ and $c_\mu$ as a $\RR(W_1)$-linear sum of the invariants $r_i$ and $r_\mu$. This shows that every $f\in R[\Lambda_{2d}^n]^{\BB(n)}$ can be rewritten as a polynomial in the invariants $r_i$ and $r_\mu$ with coefficients in the field $\RR(W_1)$. Since the polynomial $f$ is invariant and the $r_i$ and $r_\mu$ are both algebraically independent and invariant, we find that the coefficients of this polynomial must be invariant as well. Hence they can be written as a rational function in the $q_j$. This gives a complete description of $f$ only in terms of the chosen invariants.
\end{proof}

\section{The rational invariants of \texorpdfstring{$W_1$}{W1}}\label{sec4}

Recall that $\mathbb{R}(W_1) = \mathbb{R}(c_ {ij} | 1 \leq i < j \leq n)$ with $ c_{ij} = -  c_{ji}$ where the induced $\BB(n)$ action is given by $\tau\sigma\cdot c_{ij} = \tau_i\tau_j c_{\sigma(i)\sigma(j)}$. We define $u_i$ as in the previous section. The goal of this section os to find a generating set for the rational invariants $\RR(W_1)^{\BB(n)}$.
To apply more algebro-geometric tools, we will work with the action of $\BB_\CC(n) = \BB(n)$ on the complexification of our vector space $(W_1)_\CC$.

\begin{definition}
    Let $W$ be a finite dimensional complex vector space with the action of a finite reflection group $G\subset \GL(n,\CC)$. We say that a finite subset $S\subset\CC[W]^G$ separates orbits, if for $v,v'\in W$, we have $p(v) = p(v')$ for all $p\in S$ only if there exists a $g\in G$ such that $g\cdot v = v'$.
    
    A finite subset $S\subset \CC(W)^{G}$ is said to separate general orbits, if there exists a nonempty Zariski-open $U\subset W $ such that if for $v,v' \in U$, we have $p(v) = p(v')$ for all $p\in S$, then there exists a $g\in G$ with $g\cdot v = v'$. 
\end{definition}

\begin{lemma}\label{general-orbit-sep}
    Let $W$ and $G$ be as above. A finite subset $S\subset \CC(W)^G$ separates general orbits if and only if it already generates the field of rational invariants, $\CC(W)^G = \CC(S)$.
\end{lemma}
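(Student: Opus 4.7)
The plan is to translate the problem via the rational map defined by $S$ and use the standard dictionary between dominant rational maps and function-field extensions. Write $S = \{s_1, \ldots, s_k\}$, define $\phi_S: W \dashrightarrow \CC^k$ by $v \mapsto (s_i(v))_i$, and let $Y \subset \CC^k$ be the (irreducible) Zariski closure of its image. Pulling back coordinate functions embeds $\CC(Y)$ into $\CC(W)$ with image the subfield $\CC(S)$, which lies in $\CC(W)^G$ by invariance of the $s_i$. The claim then becomes: $\CC(S) = \CC(W)^G$ if and only if a generic fiber of $\phi_S$ consists of a single $G$-orbit.

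For the $(\Leftarrow)$ direction, I would fix a finite generating set $p_1, \ldots, p_m$ of the algebra $\CC[W]^G$ and, using $\CC(S) = \CC(W)^G$ together with Lemma \ref{fininte-group-frac-field}, write each $p_j = f_j(s)/g_j(s)$ with polynomials $f_j, g_j$. On the dense open $U \subset W$ where all $s_i$ are defined and every $g_j \circ \phi_S$ is nonvanishing, the equality $s_i(v) = s_i(v')$ for every $i$ forces $p_j(v) = p_j(v')$ for every $j$, hence $p(v) = p(v')$ for every polynomial invariant $p$. Since polynomial invariants of a finite group action separate orbits, $v$ and $v'$ must lie in the same $G$-orbit, giving separation of general orbits.

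For the $(\Rightarrow)$ direction, I would consider the tower $\CC(S) \subset \CC(W)^G \subset \CC(W)$. Replacing $G$ by the image of its representation (making the action faithful), Artin's theorem yields $[\CC(W):\CC(W)^G] = |G|$. In characteristic zero, the degree $[\CC(W):\CC(S)]$ equals the cardinality of a generic fiber of the dominant map $\phi_S: W \dashrightarrow Y$. Each generic fiber is $G$-invariant, and on a suitable dense open decomposes into $N$ free orbits of size $|G|$, so $[\CC(W):\CC(S)] = N|G|$ and hence $[\CC(W)^G:\CC(S)] = N$. The hypothesis that $S$ separates general orbits says precisely that $N = 1$, giving $\CC(S) = \CC(W)^G$. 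The main delicacy is this last identification: bundling Artin's formula, the characteristic-zero equality of extension degree with generic fiber size for a dominant rational map of irreducible varieties, and the freeness of a general $G$-orbit in a faithful finite action. None of these ingredients is deep individually, but keeping them separate is what makes the biconditional clean.
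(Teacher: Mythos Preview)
Your argument is correct. The paper itself does not prove this lemma; it simply records it as a special case of \cite[Lemma~2.1]{PV94} (Popov--Vinberg). What you have written is essentially the standard proof underlying that reference: translate the question into the degree of the extension $\CC(S)\subset\CC(W)$ via the dominant rational map $\phi_S$, and combine Artin's formula $[\CC(W):\CC(W)^G]=|G|$ with the characteristic-zero identification of extension degree and generic fibre cardinality. Your handling of the ``separates $\Rightarrow$ generates'' direction is the substantive one, and the freeness of a general orbit (after passing to the image of $G$ in $\GL(W)$) is exactly the point that makes the fibre count $N|G|$ work.

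Two small remarks. First, in your $(\Leftarrow)$ direction the appeal to Lemma~\ref{fininte-group-frac-field} is unnecessary: once $\CC(S)=\CC(W)^G$, each polynomial invariant $p_j$ already lies in $\CC(S)$ and can be written as $f_j(s)/g_j(s)$ directly; you only need the fact (your Lemma~\ref{orbit-sep}, or rather the argument inside it) that polynomial invariants of a finite group separate all orbits. Second, the hypothesis that $G$ be a reflection group plays no role in your proof, nor in the cited result; the statement holds for any finite linear group, and your argument reflects that.
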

This is a special case of \cite[Lemma 2.1]{PV94}.

\begin{lemma}\label{orbit-sep}
    Let $W$ and $G$ be as above. If a finite subset $S\subset\CC[W]^G$ generates the ring of invariants then it separates the $G$-orbits.
\end{lemma}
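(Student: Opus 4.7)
The plan is to prove the contrapositive: whenever $v, v' \in W$ lie in distinct $G$-orbits, I aim to produce an invariant polynomial $f \in \CC[W]^G$ with $f(v) \neq f(v')$. Granting such an $f$, the lemma follows quickly: since $S$ generates $\CC[W]^G$ as a $\CC$-algebra, $f$ is a polynomial expression in the elements of $S$, and so $p(v) = p(v')$ holding for every $p \in S$ would force $f(v) = f(v')$, a contradiction.

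The construction uses only the finiteness of $G$; the reflection hypothesis plays no role. The orbits $G\cdot v$ and $G\cdot v'$ are finite disjoint subsets of $W$, so by a standard Lagrange interpolation argument I can find a polynomial $h \in \CC[W]$ that vanishes on every point of $G\cdot v$ and takes the value $1$ on every point of $G\cdot v'$.

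The separating invariant is then obtained by averaging via the Reynolds operator,
\[
f := \frac{1}{|G|}\sum_{g \in G} g\cdot h,
\]
which is automatically $G$-invariant. Since $G\cdot v$ and $G\cdot v'$ are both $G$-stable, for every $g \in G$ one has $g^{-1}v \in G\cdot v$ and $g^{-1}v' \in G\cdot v'$, so by the defining values of $h$ a direct computation gives $f(v) = 0$ and $f(v') = 1$.

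I expect the only nontrivial step to be the construction of the interpolating polynomial $h$, and even this is routine: distinct points of $W$ are cut out by pairwise comaximal maximal ideals of $\CC[W]$, so the Chinese remainder theorem produces $h$ immediately. Overall the argument is a general fact about finite group actions in characteristic zero and does not invoke any of the specific structure (the variables $c_{ij}$, the group $\BB(n)$, etc.) of Section 4.
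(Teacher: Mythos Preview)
Your proof is correct and follows the same outline as the paper's: construct a polynomial separating $v$ and $v'$, symmetrize it to obtain an invariant, then use that $S$ generates $\CC[W]^G$ to derive a contradiction. The only real difference is in the symmetrization step: the paper picks a polynomial $f$ with $f(v)=0$ and $f$ nonvanishing on $G\cdot v'$, and then forms the \emph{product} $f^G:=\prod_{g\in G} g\cdot f$, whereas you interpolate $h$ to be constant on both orbits and then \emph{average} via the Reynolds operator. Your version needs $|G|$ invertible in the base field (fine over $\CC$), while the paper's multiplicative symmetrization works in any characteristic; on the other hand, your interpolation-plus-Reynolds argument yields the cleaner values $f(v)=0$, $f(v')=1$ directly. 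Both are standard, and your observation that the reflection hypothesis is irrelevant is correct and applies equally to the paper's proof.
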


\begin{proof}
    Take any $v,v'\in W$ not lying on the same orbit. Let $G\cdot v'$ be the orbit of $v'$. It is finite and does not contain $v$, thus we find a polynomial $f\in \CC[W]$ such that $f(v) = 0$ and $f(w) \neq 0$ for all $w\in G\cdot v'$. Let $f^G := \prod_{g\in G} g\cdot f$. Clearly $f\in \CC[W]^G$ and $f(v) = 0$ while $f(v') \neq 0$. The invariant $f^G$ separates $v$ and $v'$.

    Now assume there exists a finite generating set $S\subset \CC[W]^G$ which does not separates orbits. It follows that there exist $v,v'\in W$ which do not lie on the same orbit but still $p(v) = p(v')$ for all $p\in S$. But now the separating invariant $f^G$ constructed above is a polynomial in the $p\in S$ thus $f^G(v) = f^G(v')$ which is a contradiction. Therefore $S$ must separate invariants.  
\end{proof}

Before we come to the main proposition of this section, we illuminate an interesting connection to computer science that will also explain the main difficulty in the proof of Proposition \ref{part-case}. We write $G = (V,E)$ for a graph with $V := (p_1,\dots,p_n)$ the set of vertices and $E := \{ e_{ij}\}$ the set of edges. We have $e_{ij}\in E$ if and only if an edge connects the vertices $p_i$ and $p_j$. A graph isomorphism is now a relabeling $\sigma \in S_n$ of the vertices with induced action on $E$ given by $e_{ij}\mapsto e_{\sigma(i)\sigma(j)}$. The graph isomorphism problem asks for an efficient algorithm that checks for two given graphs $G,G'$ whether an $\sigma\in S_n$ exists, such that $\sigma(G) = G'$.

This is a classical problem in complexity theory and relevant for the $P=NP$-question. It is not NP-hard but no polynomial time algorithm is known which makes it a possible candidate for an intermediate complexity class. In a recent breakthrough \cite{Bab16} proved that the problem is solvable in quasi-polynomial time for any graph $G$. This means that asymptotically $n^{(\log n)^{\mathcal{O}(1)}}$ computational steps are needed where $n$ is the number of vertices.

Consider the complex vector generated by the edges of the complete $n$-graph $W := \langle e_{ij}\;|\; 1\leq i<j\leq n\rangle_\CC$. The $S_n$-action extends to the $W$ which lets us define the ring of invariants $\CC[W]^{S_n}$. Suppose we have a generating set for the ring of invariants $S := \{ p_i\;|\; i=1,\dots,m \}$. By Lemma \ref{orbit-sep}, it separates orbits. This allows us to easily check whether two given graphs $G$ and $G'$ are isomorphic by evaluating the generators $p\in S$ $e_{ij}\mapsto 1$ if $e_{ij}\in E$ and $e_{ij} \mapsto 0$ else. These evaluations will agree for every $p\in S$ if and only if $G$ and $G'$ are isomorphic.

Polynomials can be evaluated in polynomial time in their degree and number of variables, thus if we can prove the existence of a generating set $S\subset \CC[W]^{S_n}$ with $\#S = \mathcal{O}(n^k)$ for some $k\in\NN$, we would have shown that the graph isomorphism problem lies in the complexity class $P$. This indicates that this is a very hard problem.

Classical results like Noether's bound only give us an exponential bound. We can bound the degree of the generators by $\#S_n = n!$ thus we get
\[
\# S \leq \binom{n!+\frac{n^2+n}{2}}{\frac{n^2+n}{2}}.
\]
For more about the connection of invariant theory and the graph theory, see \cite[Chapter 5.5]{DK15}.

In contrast to these difficulties in finding a generating set for the ring of invariants, we only need to find a generating set for the field of rational invariants, which only separates general orbits. It turns out that the rational invariants we construct fail to separate orbits when $c_{ij} = c_{i'j'}$ for some $(i,j) \neq (i',j')$. This is exactly the case which would be applicable to the graph isomorphism problem.

\begin{proposition}\label{part-case}
    The ring of rational invariants $\CC((W_1)_\CC)^{\BB(n)}$ is generated by the polynomials

\begin{align*}
    p_l &:= \sum_{i<j}  c_{ij}^{2l} & l= 1,\dots,\binom{n}{2} \\
    q_l &:= \sum_i u_i^l & l=2,\dots,n \\
    z &:= \sum_{i<j<k} (u_i - u_j)(u_j - u_k)(u_k - u_i) c_{ij} c_{jk} c_{ki}.
\end{align*}
\end{proposition}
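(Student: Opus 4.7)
The plan is to apply Lemma \ref{general-orbit-sep}: it suffices to show the invariants $\{p_l, q_l, z\}$ separate general $\BB(n)$-orbits on $(W_1)_\CC$. I will work on a nonempty Zariski-open subset $U \subset (W_1)_\CC$ of points $v = (c_{ij})$ where all $u_i(v)$'s are pairwise distinct, all $c_{ij}(v)^2$'s are pairwise distinct and nonzero, and two further families of non-vanishing conditions (introduced below) hold. Fix $v, v' \in U$ with the same values under $p_l, q_l, z$; the aim is to produce $\tau\sigma \in \BB(n)$ with $\tau\sigma \cdot v = v'$.

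The first three steps reduce modulo the $S_n$-factor to a sign ambiguity. The $p_l$'s are the first $\binom{n}{2}$ power sums of the $\binom{n}{2}$-element tuple $\{c_{ij}^2\}$, so Newton's identities recover the multiset. Since $q_1 = 2p_1$, the invariants $q_1,\ldots,q_n$ are the first $n$ power sums of $(u_1,\ldots,u_n)$ and determine the multiset $\{u_i\}$. As the $u_i(v)$'s are distinct, there is a unique $\sigma \in S_n$ with $u_i(\sigma^{-1}\cdot v') = u_i(v)$ as ordered tuples, so after replacing $v'$ by $\sigma^{-1}\cdot v'$ we may assume the $u_i$'s agree.

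In Step 4, I upgrade matching multisets of squares to matching tuples. The distinct values $c_{ij}^2$ and $(c'_{ij})^2$ determine a unique edge permutation $\pi$ of $E := \binom{[n]}{2}$ such that $(c'_e)^2 = c_{\pi(e)}^2$ for each edge $e$. Equality of vertex sums becomes $\sum_{e\in N(v)}(c_{\pi(e)}^2 - c_e^2) = 0$ for each vertex $v$, a linear expression in the $c_e^2$. If $\pi$ does not preserve some star $N(v)$, this expression is not identically zero as a polynomial, and including in $U$ the finitely many conditions that every such nontrivial expression is nonzero, we force $\pi$ to preserve every star. Since $N(i) \cap N(j) = \{\{i,j\}\}$ in $K_n$, this yields $\pi = \id$, so $c_{ij}^2 = (c'_{ij})^2$ for every pair.

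Step 5, the main obstacle, removes the remaining sign ambiguity. Write $c'_{ij} = \epsilon_{ij}c_{ij}$ with $\epsilon_{ij}\in\{\pm 1\}$; since the cycle space of $K_n$ over $\FF_2$ is generated by triangles, $\epsilon$ is the coboundary of some $\tau\in\{\pm 1\}^n$ (meaning $\epsilon_{ij} = \tau_i\tau_j$) if and only if $\epsilon_{ij}\epsilon_{jk}\epsilon_{ki} = 1$ for every triple. Set $T := \{(i,j,k) : \epsilon_{ij}\epsilon_{jk}\epsilon_{ki} = -1\}$. Matching $u_i$'s gives $0 = z(v) - z(v') = 2\sum_{(i,j,k)\in T} V_{ijk}\,c_{ij}c_{jk}c_{ki}$ with $V_{ijk} := (u_i-u_j)(u_j-u_k)(u_k-u_i)$, so the task reduces to the lemma that for every nonempty $T\subseteq\binom{[n]}{3}$ the polynomial $P_T := \sum_{(ijk)\in T} V_{ijk}c_{ij}c_{jk}c_{ki}$ is not identically zero. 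I would prove it by fixing any $(i_0,j_0,k_0)\in T$ and restricting to the subspace where $c_{ab} = 0$ whenever $\{a,b\}\not\subseteq\{i_0,j_0,k_0\}$: each summand for a different triple contains a factor $c_{ab}$ with an endpoint outside $\{i_0,j_0,k_0\}$ and therefore vanishes, while the $(i_0,j_0,k_0)$-summand restricts to the manifestly nonzero polynomial $(c_{i_0k_0}^2 - c_{j_0k_0}^2)(c_{i_0j_0}^2 - c_{i_0k_0}^2)(c_{j_0k_0}^2 - c_{i_0j_0}^2)\,c_{i_0j_0}c_{j_0k_0}c_{k_0i_0}$. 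Incorporating the non-vanishing of each $P_T$ into $U$, the identity above forces $T = \emptyset$, so $\epsilon$ is a coboundary, $v' = \tau\sigma\cdot v$, and the proof is complete.
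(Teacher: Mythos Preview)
Your proof is correct and follows essentially the same strategy as the paper: reduce via Lemma~\ref{general-orbit-sep} to separating general orbits, use the power sums $p_l$ and $q_l$ to match the multisets $\{c_{ij}^2\}$ and $\{u_i\}$, force the residual edge permutation to be the identity by excluding finitely many Zariski-closed loci, and then use $z$ to resolve the remaining sign ambiguity on each $c_{ij}$. Your version is in fact slightly more complete in two places: the paper excludes the vanishing of the polynomials $f_I$ (your $P_T$) without ever checking that they are nonzero, whereas your restriction to the three edges of a fixed triangle in $T$ supplies this missing verification; and your cycle-space/coboundary formulation of the final step (triangles generate the cycle space of $K_n$ over $\FF_2$, so $\epsilon_{ij}\epsilon_{jk}\epsilon_{ki}=1$ for all triples forces $\epsilon_{ij}=\tau_i\tau_j$) cleanly replaces the paper's equivalent hands-on induction.
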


\begin{proof}
    It is clear that all $p_l$ are $\BB(n) = \BB_\CC(n)$ invariant. To see that the same holds for $q_l$ and $z$, note that $\tau\sigma \cdot u_i = u_{\sigma(i)}$. This immediately shows that the $q_l$ are invariant and $\tau\cdot z = z$. To see that $\sigma \cdot z = z$, note that sigma permutes the $ c_{ij} c_{jk} c_{ki}$ with a potential sign change, which is exactly offset by the sign change of $\sigma \cdot (u_i - u_j)(u_j - u_k)(u_k - u_i)$.
    In order to show that the polynomials generate the rational invariants, we now need to show that they separate general $\BB(n)$ orbits by Lemma \ref{general-orbit-sep}.

    We have $\binom{n}{2}$ algebraically independent $ c_{ij}^2$, so the first $\binom{n}{2}$ power sums $p_k$ form a basis for the $S_{\binom{n}{2}}$-symmetric polynomials with the $ c_{ij}^2$ as base variables. In particular, they separate $S_{\binom{n}{2}}$ orbits, so if $p_l(v) = p_l(v')$ for all $l = 1,\dots,\binom{n}{2}$, then there exists a permutation $\sigma \in S_{\binom{n}{2}}$ such that $v_{\sigma(ij)} = \pm v'_{ij}$.

    We have that $q_1 = 2p_1$ and the $u_i$ are algebraically independent, thus with the same reasoning as above, if $q_l(v) = q_l(v')$ for all $l$, then there exists a permutation $\sigma\in S_n$ such that $u_{\sigma(i)}(v) =  u_i(v')$.
    Thus after applying a permutation in $S_n$ to $v$, we have $u_i(v) = u_i(v')$, and $v_{\sigma(ij)} = \pm v_{ij}'$ for a larger permutation $\sigma \in S_{\binom{n}{2}}$. But applying this larger permutation must retain the equality of the $u_i$. This implies
    \[
    u_i(\sigma\cdot v) = u_i(v') 
    \]
    or slightly rearranged
    \[ u_i(v) - u_i(\sigma\cdot v) = \sum_{j\neq i} v_{ij}^2-v_{\sigma(ij)}^2 = 0.\]

    This is a Zariski-closed condition, however, so we can simply exclude the vanishings of the finitely many polynomials

    \[
    \sum_{j\neq i}  c_{ij}^2 -  c_{\sigma(ij)}^2
    \]
    for $i=1,\dots,n$ and $\sigma\in S_{\binom{n}{2}}$ whenever it is not the zero polynomial. If on the other hand the above polynomials is the zero polynomial for a given $\sigma$ and every $i$, then $\sigma$ is the identity.

    Thus if we have two vectors $v,v'\in U\subset (W_1)_\CC$ on which the $p_l$, $q_l$ evaluate equally, we may assume that after a permutation in $S_n$, we have $v_{ij} = \pm v_{ij}'$. We refine the open $U$ further in a way similar to above. Let $\mathfrak{I}\subset \mathcal{P}([n])$ be the set of all subsets of $[n]$ with three elements. For each nonempty subset $I\subset\mathfrak{I}$, we exclude the vanishing of of the polynomial
    \[
    f_I := \sum_{\{i,j,k\}\in I} v_{ij}v_{jk}v_{ik}(u_i(v)-u_j(v))(u_j(v)-u_k(v))(u_i(v)-u_k(v))
    \]
    from the Zariski open $U$.

    Assume that $z(v) = z(v')$. By our starting assumption, these polynomials can only differ by a possibly different sign in front of each summand in the sum of $z$. Therefore their difference gives us
    \[
    0 = z(v) - z(v') = 2f_I(v)
    \]
    for some $I \subset \mathfrak{I}$. But we have excluded the vanishing of this polynomial except if $I = \emptyset$. This shows that $v_{ij}v_{jk}v_{ki} = v_{ij}'v_{jk}'v_{ki}'$ since the $u_i(v) = u_i(v')$ by assumption.

    In other words, if $v_{ij} = -v_{ij}'$, then we have either $v_{jk} = - v_{jk}'$ or $v_{ki} = -v_{ki}'$.
    Now a simple induction argument shows that there exists a $\tau \in \{-1,1\}^n$ such that $\tau v = v'$. We first fix the notation $\delta_{ij} = v_{ij}/v_{ij}' \in \{-1,1\}$ which works, because we have excluded the case where an entry of the vector $v$ is zero.

    Applying a $\tau$ to $v$ changes $\delta_{ij}$ to $\tau_i\tau_j\delta_{ij}$.
    For $n=2$ we can change the lone $\delta_{12}$ freely by picking $\tau_1 = 1$ and $\tau_2 = \pm 1$. Assume we find a $\tau \in \{-1,1\}^{n-1}$ such that changing $v$ by $\tau$ gives $\delta_{ij} = 1$ for $1\leq i < j \leq n-1$. Now $v_{12}v_{2n}v_{1n} = v_{12}'v_{2n}'v_{1n}'$ shows that $\delta_{2n} = \delta_{1n} = \pm 1$ since $\delta_{12} = 1$. If they are positive, all other $\delta_{in}$ must be positive as well and we are done. Otherwise choose $\tau_n = -1$ and we are again in the first case.
    \end{proof}

    \begin{corollary}
        The invariants $p_l, q_l$ and $z$ defined above also generate the field of real rational invariants $\RR(W_1)^{\BB(n)}$.
    \end{corollary}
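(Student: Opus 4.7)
The plan is to reduce to the already-proved complex case of Proposition \ref{part-case} by a Galois descent argument along the degree-two extension $\CC/\RR$. Identifying $\CC((W_1)_\CC)$ with $\CC(c_{ij}\mid 1\leq i<j\leq n)$, let $\gamma$ denote the order-two automorphism of $\CC((W_1)_\CC)$ that conjugates complex coefficients while fixing each variable $c_{ij}$. Its fixed field is $\RR(W_1) = \RR(c_{ij})$. Since $\BB(n)$ consists of signed permutation matrices with real entries, its action on the $c_{ij}$ is defined over $\RR$, so $\gamma$ commutes with the $\BB(n)$ action. Consequently the fixed field of $\gamma$ on $\CC((W_1)_\CC)^{\BB(n)}$ is the common fixed field $\RR(W_1)^{\BB(n)}$.

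Next I would observe that each of the polynomials $p_l$, $q_l$, and $z$ is a polynomial in the $c_{ij}$ with real coefficients, so all three are fixed by $\gamma$. Hence the subfield $\CC(p_l, q_l, z) \subset \CC((W_1)_\CC)$ is preserved by $\gamma$, and $\gamma$ restricted to this subfield acts exactly as complex conjugation on the coefficients of a rational expression in the $p_l$, $q_l$, $z$. Its fixed field is therefore $\RR(p_l, q_l, z)$.

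To conclude, I would apply Proposition \ref{part-case}, which gives the equality $\CC(p_l, q_l, z) = \CC((W_1)_\CC)^{\BB(n)}$, and take $\gamma$-fixed fields on both sides. This yields $\RR(p_l, q_l, z) = \RR(W_1)^{\BB(n)}$, which is precisely the claim. There is no real obstacle: the whole argument rests on the fact that the chosen generators are already real-coefficient polynomials, so Galois descent along $\CC/\RR$ works cleanly without any further computation.
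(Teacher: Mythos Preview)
Your Galois descent argument is correct: since the generators $p_l$, $q_l$, $z$ lie in $\RR[c_{ij}]$ and $\BB(n)$ acts by real matrices, taking fixed fields under complex conjugation on both sides of Proposition~\ref{part-case} yields the real statement. The paper itself states the corollary without proof, so your write-up supplies exactly the standard justification one would expect, and there is nothing to compare against.
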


\printbibliography

\end{document}